\newtheorem{theorem}{Theorem}[section]
\newtheorem{lemma}[theorem]{Lemma}
\newtheorem{prop}[theorem]{Proposition}
\theoremstyle{definition}
\newtheorem{definition}[theorem]{Definition}
\newtheorem{example}[theorem]{Example}
\newtheorem{conjecture}[theorem]{Conjecture}
\theoremstyle{remark}
\newtheorem{remark}[theorem]{Remark}
\newcommand{\Ha}{H_2^{\text{aff}}}
\newcommand{\Hl}{\mathcal K_2^{\mathbf v}}
\newcommand{\Han}{H_n^{\text{aff}}}
\newcommand{\fv}{f_{\mathbf v}}
\newcommand{\Jv}{J_{\mathbf v}}
\numberwithin{equation}{section}
\begin{document}

\title{On the centre of the cyclotomic Hecke algebra of $G(m,1,2)$}

\author{Kevin McGerty}
\address{Department of Mathematics, Imperial College London. }

\date{April, 2010}

\begin{abstract}
We compute the centre of the cyclotomic Hecke algebra attached to $G(m,1,2)$ and show that if $q \neq 1$ it is equal to the image of the centre of the affine Hecke algebra $\Ha$. We also briefly discuss what is known about the relation between the centre of an arbitrary cyclotomic Hecke algebra and the centre of the affine Hecke algebra of type $A$. 
\end{abstract}

\maketitle

\section{On the centre of $\Hl$}
\label{onthecenter}

\subsection{}
We consider quotients of the affine Hecke algebra $\Ha$ of type $A_2$: this is the algebra over $\mathcal A = \mathbb Z[q]$ generated by $T, X_1^{\pm 1},X_2^{\pm 1}$, such that
\begin{enumerate}
\item there is an injective algebra map $\mathcal A[X_1^{\pm 1}, X_2^{\pm 1}] \to \Ha $. 
\item $(T-q)(T+1) = 0$;
\item $TX_1T = qX_2$.
\end{enumerate}
Let $S$ denote the image of the ring of Laurent polynomials $\mathcal A[X_1^{\pm}, X_2^{\pm}]$ in $\Ha$, and let $W$ be the two element group with nontrivial element $s$ which acts on $S$ by interchanging $X_1$ and $X_2$. We write the action as $f \mapsto {^sf}$. For convenience of notation we set $Q = q-1$. Relation ($3$) above is then equivalent to 
\begin{equation}
\label{commutation}
Tf = {^sf}T + Q\frac{f-{^sf}}{1 - X_1X_2^{-1}}, \qquad f \in S.
\end{equation}
It is easy to check from this that the centre of $\Ha$ is $S^W$ the algebra of symmetric functions in the $X_i^{\pm 1}$. 

\subsection{}
Now let $A = \mathbb Z[q^{\pm1},v_1^{\pm1}, v_2^{\pm1}, \ldots, v_m^{\pm1}]$ be a Laurent polynomial ring, and extend the scalars of $\Ha$ to $A\otimes_{\mathcal A} \Ha$. By abuse of notation we will again denote this algebra by $\Ha$, and similarly for the subalgebra $S$. 

\begin{definition}
\label{cycldef}
The cyclotomic Hecke algebra $\Hl$ of type $G(m,1,2)$ is a quotient of $\Ha$: let 
\begin{equation}
\label{charpoly}
\begin{split}
\fv &= (x -v_1)(x-v_2)\ldots(x -v_m) \\
& = \sum_{j=0}^{m} (-1)^{m-j}e_{m-j}x^{j}.
\end{split}
\end{equation}
where the $e_j$ are the elementary symmetric functions in the $v_i$'s. Let $\Jv$ be the two-sided ideal in $\Ha$ generated by $f_1 = \fv(X_1)$ and set $\Hl = \Ha/\Jv$. (Note that our definition coincides with that in \cite{A} up to rescaling, after $v_1,v_2,\ldots,v_m$ have been inverted, except that his ``$q$'' is a square root of ours).
\end{definition}

We say that a polynomial $p$ in $S$ is $m$-\textit{restricted} (or simply \textit{restricted}, when the integer $m$ is understood) if the monomials $X_1^iX_2^j$ occurring with nonzero coefficient in $p$ all satisfy $0 \leq i,j \leq m-1$.  Let $S_m$ denote the space of $m$-restricted polynomials. It is known that the image $R_m$ of $S$ in $\Hl$ is isomorphic as an $A$-module to $S_m$, and moreover $\Hl = R_m \oplus R_mT$ as an $A$-module \textit{i.e.} every element of $\Hl$ can be written uniquely in the form $f +gT$ where $f$ and $g$ are restricted. We refer to this last fact as the ``basis theorem'' for $\Hl$.

\subsection{}
We start with a technical lemma. Let $D$ be the difference operator on $S$ given by
\[
D(f) = (f-{}^sf)/(1-X_1X_2^{-1}), \quad f \in S,
\]
so that the relation \ref{commutation} becomes $Tf = {}^sfT+QD(f)$. Let $D_s$ be the composition $f \mapsto {}^s(-D(f))$, that is,
\[
D_s(f) = (f-{}^sf)/(1-X_1^{-1}X_2).
\]

\begin{lemma}
The operators $D$ and $D_s$ preserve $S_m$ and thus induce $A$-linear maps on $R_m$. Moreover 
\[
D(f) = D_s(f)
\]
if and only if $f ={}^sf$. 
\end{lemma}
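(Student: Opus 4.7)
The two assertions reduce to direct computations in the Laurent polynomial ring $A[X_1^{\pm 1}, X_2^{\pm 1}]$.

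For preservation of $S_m$ by $D$, it suffices to evaluate $D$ on the monomial basis. Using the factorisations
\[
X_1^i X_2^j - X_1^j X_2^i = X_1^j X_2^j(X_1^{i-j} - X_2^{i-j}), \qquad 1 - X_1 X_2^{-1} = -X_2^{-1}(X_1 - X_2),
\]
together with the geometric-sum identity, one gets for $i > j$
\[
D(X_1^i X_2^j) = -\sum_{k=0}^{i-j-1} X_1^{i-1-k} X_2^{j+1+k}.
\]
The $X_1$-exponents here lie in $[j, i-1]$ and the $X_2$-exponents in $[j+1, i]$, so both fall inside $[0, m-1]$ whenever $0 \le j \le i \le m-1$; the case $i = j$ is trivial and $i < j$ follows by interchanging roles. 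Preservation of $S_m$ by $D_s$ then follows from the identity $D_s(f) = -{}^s D(f)$ (implicit in the definition) and the obvious fact that $s$ permutes $S_m$. Once this is in place, the induced $A$-linear maps on $R_m$ are defined by transport along the $A$-module isomorphism $R_m \cong S_m$ recalled just before the lemma.

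For the equivalence I would compute $D - D_s$ directly. Factoring $(f - {}^s f)$ out of both terms and reducing to a common denominator yields
\[
D(f) - D_s(f) = (f - {}^s f)\left(\frac{1}{1 - X_1 X_2^{-1}} - \frac{1}{1 - X_1^{-1} X_2}\right) = -\frac{f - {}^s f}{X_1 - X_2}\,(X_1 + X_2).
\]
Since $f - {}^s f$ is $s$-antisymmetric it vanishes on the diagonal $X_1 = X_2$ and is therefore divisible by $X_1 - X_2$ in the Laurent polynomial ring, so the quotient $(f - {}^s f)/(X_1 - X_2)$ is a bona fide element of that ring. The ring being a domain and $X_1 + X_2$ a nonzero-divisor, the right-hand side vanishes exactly when $f = {}^s f$; the reverse implication is immediate.

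I anticipate no substantive obstacle: the calculation is entirely elementary. The one point that deserves a line of care is the implicit passage through the fraction field in the identity for $D - D_s$, which is legitimate precisely because antisymmetry of $f - {}^s f$ guarantees that $(f - {}^s f)/(X_1 - X_2)$ remains a Laurent polynomial, allowing the domain/zero-divisor argument to close the proof.
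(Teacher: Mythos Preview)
Your proof is correct. The preservation argument is essentially the same as the paper's: both compute $D$ on monomials and read off the exponent ranges. For the equivalence, however, you take a genuinely different route. The paper argues combinatorially, observing that for each monomial $X_1^iX_2^j$ the element $D(X_1^iX_2^j)$ contains $X_2$ to the power $\max\{i,j\}$ but $X_1$ only to power $\max\{i,j\}-1$, and deduces that for any $p$ with $D(p)\neq 0$ the top $X_2$-degree strictly exceeds the top $X_1$-degree, while $D_s(p)$ satisfies the reverse inequality; hence $D(p)=D_s(p)$ forces both to vanish. You instead derive the closed-form identity
\[
D(f)-D_s(f)=-\frac{f-{}^sf}{X_1-X_2}\,(X_1+X_2)
\]
and finish by invoking that $S$ is a domain. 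Your argument is shorter and makes the algebraic mechanism transparent; the paper's degree analysis is more hands-on and yields as a by-product the slightly sharper statement that $D(f)=D_s(f)$ already forces $D(f)=0$ (which is of course immediate from your identity as well, since $D(f)=0$ iff $f={}^sf$ in a domain).
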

\begin{proof}
The proof that $D$ and $D_s$ preserve $R$ is a direct calculation: observe that if $f = X_1^i X_2^j$, then we have 
\[
D(f) = \left\{\begin{array}{cc}X_1^i\sum_{k=0}^{j-i-1} X_1^kX_2^{j-k}, & \text{ if } j>i; \\
-X_1^j\sum_{k=0}^{i-j-1} X_1^kX_2^{i-k}, & \text{ if } j<i.\end{array}\right.
\]
Thus as the highest powers of $X_1$ and $X_2$ occurring in these expressions is $\max\{i,j\}$, it is clear that if $p$ is any $m$-restricted polynomial, so is $D(p)$. Since $^sD$ is the composition of $-D$ with $s$, it clearly also preserves restricted polynomials.

Moreover, note that in $D(f)$ where $f$ is the monomial above, $X_1$ never occurs to the power $\max\{i,j\}$ whereas $X_2$ does, thus for any restricted polynomial $p$, if $D(p)\neq 0$, it has a higher power of $X_2$ occurring than occurs as a power of $X_1$. Thus similarly $D_s(p)$, if nonzero, has a higher power of $X_1$ occurring than occurs as a power of $X_2$. It follows that $D(p) = D_s(p)$ if and only if $D(p) = 0$, and this occurs only if $p = {}^sp$ as claimed. 
\end{proof}

\begin{lemma}
\label{symmetricnecessity}
Let $z \in \mathcal K_2$ and suppose that $z = f+gT$ where $f, g \in R_m$. Then $z$ commutes with $T$ if and only if $f,g \in R_m^W$. 
\end{lemma}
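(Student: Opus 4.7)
The plan is to compute $[T,z]$ explicitly using \eqref{commutation} and the quadratic relation $T^2 = QT + q$, use the basis theorem to split the result into its $R_m$- and $R_m T$-components, and then extract the symmetry of $f$ and $g$ from the vanishing of these components by exploiting the $s$-action on $R_m$ and the previous lemma.

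A direct expansion, keeping track of the rewriting $TgT = {}^sg(QT+q) + QD(g)T$, gives
\[
[T,z] = \bigl(QD(f) + q({}^sg - g)\bigr) + \bigl(({}^sf - f) + Q({}^sg - g) + QD(g)\bigr) T.
\]
By the previous lemma both $D(f)$ and $D(g)$ lie in $S_m$, and ${}^sf, {}^sg$ are obviously restricted whenever $f,g$ are, so each coefficient already lies in $R_m$. The basis theorem therefore shows that $z$ commutes with $T$ if and only if the two equations
\[
\text{(i)}\quad QD(f) + q({}^sg - g) = 0, \qquad \text{(ii)}\quad ({}^sf - f) + Q({}^sg - g) + QD(g) = 0
\]
hold in $R_m$.

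The key move is to extract symmetry of $f$ from (i) alone. Applying $s$ to (i) (the Weyl group acts on $R_m$ via the identification $R_m \cong S_m$) and using the identity ${}^sD(f) = -D_s(f)$, which is immediate from the definitions, gives $-QD_s(f) + q(g - {}^sg) = 0$. Combining this with (i) rewritten as $QD(f) = q(g - {}^sg)$ yields $QD(f) = QD_s(f)$. Since $Q = q-1$ is a non-zero-divisor in $A$ and $S_m$ is $A$-free, one cancels $Q$ to obtain $D(f) = D_s(f)$, and the previous lemma then forces $f \in R_m^W$. Feeding $D(f) = 0$ back into (i) leaves $q({}^sg - g) = 0$, and since $q$ is a unit in $A$, $g \in R_m^W$ as well. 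The converse is immediate: symmetric $f$ and $g$ make every term in (i) and (ii) vanish.

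The main obstacle, or at least the only non-routine step, is the symmetrisation trick in the previous paragraph. The commutator calculation itself is straightforward, but it is not obvious a priori that the single equation (i) already forces $f$ to be symmetric; the argument depends essentially on combining the $s$-twisted version of (i) with the criterion that $D(p) = D_s(p)$ if and only if $p$ is symmetric, supplied by the previous lemma.
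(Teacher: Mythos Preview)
Your argument is correct and follows essentially the same route as the paper: expand using the commutation relation and the quadratic relation, separate into $R_m$- and $R_mT$-parts via the basis theorem, and then use the $s$-action on equation (i) together with the criterion $D(f)=D_s(f)\Leftrightarrow f={}^sf$ from the preceding lemma. The only cosmetic differences are that the paper computes $Tz$ and $zT$ separately rather than $[T,z]$, and phrases the symmetrisation step as ``the right-hand side is a $(-1)$-eigenvector for $s$, hence so is the left'' rather than explicitly applying $s$ to the equation; these amount to the same thing, and your explicit mention that $Q$ is a non-zero-divisor in the integral domain $A$ makes transparent a cancellation the paper leaves implicit.
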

\begin{proof}
The sufficiency of the condition is clear. To see the necessity, we have
\[
\begin{split}
T(f +gT) &= {^sf}T + QD(f) + {^sg}T^2 + QD(g)T \\
& = ({^sf}+ Q{^sg}+QD(g))T +  QD(f)+q{^sg},
\end{split}
\]
(where we write $D$ for the operator on $R_m$ given by the previous lemma). On the other hand, we have
\[
(f+gT)T = (f+Qg)T + qg.
\]
Since $f$ and$g$ are restricted, it follows from the basis theorem for cyclotomic Hecke algebras that we must have:
\[
({^sf}+ Q{^sg}+QD(g)) = f+Qg,
\]
and 
\[
QD(f)+q{^sg} = qg
\]
Thus after rearranging the second of these equations becomes:
\[
QD(f) = q(g-{^sg}).
\] 
Now note that the right-hand side is an eigenvector for the action of $s$ with eigenvalue $-1$, thus so is the left-hand side, whence we get ${}^s(D(f)) = -D(f)$, or equivalently $D(f) = D_s(f)$. By the previous Lemma, this is possible only if $f = {^sf}$, and $D(f) =0$. But then we must also have $g-{^sg}= 0$, and so $f$ and $g$ are symmetric as required.
\end{proof}

\subsection{}
Let $\mathcal Z$ denote the centre of $\mathcal K_2$. From the previous lemma, we see that if $z = f+gT \in \mathcal Z$, then $f, g \in R_m^W$. Since $f\in R_m^W$ is already central, we see that $\mathcal Z = \mathcal Z \cap R_m \oplus \mathcal Z \cap R_mT$, and we are reduced to calculating when $gT$ is central. For this we introduce the following operator:

\begin{definition}
Let $d\colon S \to S$ be the linear map given on monomials by
\[
d(X_1^iX_2^j) =  \left\{\begin{array}{cc}X_1^iX_2^j & \text{ if } i<j; \\-X_1^jX_2^i & \text{ if } i>j  \\0 & \text{ if } i=j.\end{array}\right.
\]
Clearly $d$ preserves $S_m$, and so we may transport it to a map on $R_m$ (which we will also denote by $d$). Clearly the kernel $\text{ker}(d)$ of its action on $R_m$ is $R_m^W$, and since $d^2 = d$, $R_m = R_m^W \oplus d(R_m)$. 
\end{definition}

\begin{lemma}
\label{TpartofZ}
$\mathcal Z \cap R_mT$ is a free $A$-module of rank $m$. 
\end{lemma}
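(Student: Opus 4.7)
The strategy is to identify $\mathcal Z\cap R_mT$ with a submodule $M$ of $R_m^W$ via $gT\leftrightarrow g$, exhibit $M$ as the kernel of a surjective $A$-linear map $\Phi:R_m^W\to d(R_m)$, and conclude via rank-nullity that $M$ is free of rank $m(m+1)/2-m(m-1)/2=m$.

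By Lemma \ref{symmetricnecessity}, $gT\in\mathcal Z$ forces $g\in R_m^W$, and any such $gT$ automatically commutes with $T$; since $X_2=q^{-1}TX_1T$, centrality of $gT$ reduces to $[gT,X_1]=0$ in $\Hl$. Using $TX_1=X_2T-QX_2$ one computes
\[
[gT,X_1]=g(X_2-X_1)T-QgX_2.
\]
A key structural point is the asymmetry between multiplication by $X_1$ and by $X_2$ in $\Hl$: the former, combined with reduction via $\fv(X_1)=0$, keeps us in the polynomial ring $S$, so $gX_1\in R_m$; the latter typically produces a $T$-component because $\fv(X_2)\neq 0$ in $\Hl$. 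Applying the relation $Tf={}^sfT+QD(f)$ to $\fv(X_1)$ and using $\fv(X_1)=0$ yields $\fv(X_2)T=-QD(\fv(X_1))$ in $\Hl$, from which a short computation shows that $\fv(X_2)$ is a pure $T$-term. Hence the $R_m$-part of the reduction of $X_2^m$ is the ``formal'' reduction by $\fv(X_2)=0$, and writing $gX_2=\alpha(g)+\beta(g)T$ one obtains $\alpha(g)=s(gX_1)$ where $s$ acts on $R_m\cong S_m$ by the swap.

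Comparing components in $\Hl=R_m\oplus R_mT$ then gives the two conditions $\alpha(g)=gX_1$ and $q\beta(g)=QgX_1$. The first reads $s(gX_1)=gX_1$, i.e.\ $gX_1\in R_m^W$, and a direct calculation using the explicit form of $\fv(X_2)$ derived above shows the second follows automatically from the first (this can also be seen by invoking centrality in $\Ha$ of $g(X_1+X_2)$ and $gX_1X_2$, whose images in $\Hl$ must then have both components in $R_m^W$ by Lemma \ref{symmetricnecessity}). Hence $M=\{g\in R_m^W:gX_1\in R_m^W\}$; using $R_m=R_m^W\oplus d(R_m)$, this equals $\ker\Phi$ for the $A$-linear map $\Phi:R_m^W\to d(R_m)$ given by $\Phi(g):=d(gX_1)$.

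The principal obstacle is proving surjectivity of $\Phi$. A natural approach is a triangularity argument: totally order the monomials $X_1^iX_2^j$ with $i<j$ that form a basis of $d(R_m)$ (for instance lexicographically in $(j,i)$) and, for each such monomial, produce an explicit symmetric restricted $g$ whose image $\Phi(g)$ has that monomial as its leading term with invertible coefficient. Care is required in the boundary case $j=m-1$, where multiplying by $X_1$ forces reduction via $\fv(X_1)=0$ and introduces lower-order corrections that must still be controllable. Once surjectivity is established, rank-nullity yields $M$ free of rank $m$, as required.
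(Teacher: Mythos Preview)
Your overall strategy is the paper's: identify $\mathcal Z\cap R_mT$ with $M=\{g\in R_m^W: X_1g\in R_m^W\}=\ker\Phi$ for $\Phi(g)=d(X_1g)$, and then analyse $\Phi$. But two steps are left genuinely incomplete, and one is needlessly complicated.

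First, the derivation of $M$. Your $\alpha,\beta$ analysis of $gX_2$ is unnecessary and the claim that ``the second condition follows automatically from the first'' is not justified. The paper's argument is one line: since $g\in R_m^W$ already gives $gT=Tg$, the equation $gTX_1=X_1gT$ becomes $T(X_1g)=(X_1g)T$; as $X_1g\in R_m$ (reduction by $\fv(X_1)=0$ keeps us in $R_m$ with no $T$-part), Lemma~\ref{symmetricnecessity} applied to the element $X_1g+0\cdot T$ gives $X_1g\in R_m^W$ directly. No discussion of $gX_2$ is needed.

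Second, and more seriously, you flag the boundary case but do not resolve it, and ``rank--nullity'' is not valid over $A$ as stated: a surjection between finite free $A$-modules has projective kernel, not a priori a free one. The paper handles both issues at once by restricting $\Phi$ to $R_{m-1}^W=\mathrm{span}_A\{m_{ij}:0\le i\le j\le m-2\}$. For such $m_{ij}$ one has $X_1m_{ij}=X_1^{i+1}X_2^j+X_1^{j+1}X_2^i$ with $j+1\le m-1$, so \emph{no reduction modulo $\fv(X_1)$ ever occurs}; a direct computation gives $\Phi(m_{ij})=-X_1^iX_2^{j+1}$ plus possibly $X_1^{i+1}X_2^j$, which is unit-triangular in reverse lexicographic order. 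Hence $\Phi|_{R_{m-1}^W}:R_{m-1}^W\to d(R_m)$ is an isomorphism. Since $R_m^W=R_{m-1}^W\oplus\mathrm{span}_A\{m_{i(m-1)}:0\le i\le m-1\}$ as free $A$-modules, one reads off an explicit basis $p_i=m_{i(m-1)}-(\Phi|_{R_{m-1}^W})^{-1}\bigl(\Phi(m_{i(m-1)})\bigr)$ of $M$, establishing freeness of rank $m$ by construction rather than by rank--nullity.
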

\begin{proof}
Suppose that $gT \in \mathcal Z \cap RT$. We must have $X_1gT = gTX_1$ and $TgT = gT^2$, and these conditions are sufficient. Since $T$ is invertible (indeed $T^{-1} = q^{-1}(T+1-q)$) the second equation is equivalent to $Tg =gT$. By Lemma \ref{symmetricnecessity} this implies that $g \in R^W$, and hence the first equation becomes $(X_1g)T =T( X_1g)$. But then again using Lemma \ref{symmetricnecessity}, we see that $X_1g \in R_m^W$. Let $M$ be the space of such restricted symmetric polynomials:
\[
M = \{g \in R_m^W: X_1g \in R_m^W\}.
\]
We have shown that $\mathcal Z \cap RT = MT$. It is now a linear algebra exercise to check that $M$ is a free $A$-modules of rank $m$. By the paragraph preceding the lemma,  $R_m = R_m^W \oplus d(R_m)$ as an $A$-module. Thus if we let $\phi\colon R_m^W \to d(R_m)$ be the map $g \mapsto d(X_1g)$, we see that $M = \text{ker}(\phi)$. Let $m_{ij}= X_1^iX_2^j + X_1^jX_2^i$ for $i < j$ and $m_{ii} = X_1^iX_2^i$ be the monomial symmetric functions, and let $R^W_{m-1}$ be the span of $\{m_{ij}: 0 \leq i \leq j <m-1\}$. Then we claim that $\phi \colon R^W_{m-1} \to d(R_m)$ is an isomorphism of $A$-modules. Indeed for $j<m-1$ we have:
\[
\phi(m_{ij}) = \left\{\begin{array}{cc}-X_1^iX_2^{j+1}, & \text{ if } j-i \leq 1;\\ -X_1^iX_2^{j+1} +X_1^{i+1}X_2^j, & \text{ if } j-i >1.\end{array}\right.
\]
If we use reverse lexicographical ordering on the bases $\{m_{ij}: 0 \leq i \leq j <m-1\}$ and $\{X_1^iX_2^j: 0 \leq i < j \leq m-1\}$, then the above equations show that the matrix of $\phi_{|R^W_{m-1}}$ with respect to these ordered bases is triangular with diagonal entries equal to $- 1$, thus $\phi_{|R_{m-1}^W}$ is an isomorphism as claimed. 

This immediately implies that $M$ is a free $A$-module of rank $m$. However, we can be more precise and even specify a basis of $M$ by considering the images of $\phi(m_{i(m-1)})$, ($0 \leq i \leq m-1$): set 
\[
p_i = m_{i(m-1)} - (\phi_{R_{m-1}^W})^{-1}(\phi(m_{i(m-1)})),
\]  
then $\{p_0T,p_1T,\ldots p_{m-1}T\}$ is an $A$-basis of $\mathcal Z \cap R_mT$ 

\end{proof}

\begin{example}
\label{3example}
Let $m=3$. In this case we find that the space $\mathcal Z \cap R^WT$ is spanned by $\{p_0T,p_1T,p_2T\}$ where
 \begin{equation}
\label{theZs}
\begin{split}
p_0 = (e_3 -e_1(X_1+X_2) + X_1X_2 + X_1^2 +X_2^2), \\
p_1 = (e_3 -e_1X_1X_2 + X_1X_2(X_1+X_2)),\\
p_2 = (e_3(X_1+X_2) - e_2X_1X_2 +X_1^2X_2^2).
\end{split}
\end{equation}

Thus we have shown that the centre $\mathcal Z$ of $\Hl$ (with $m=3$) is a $9$-dimensional free submodule spanned by $R_3^W$ and $\{p_0T, p_1T, p_2T\}$ (this is exactly as stated in \cite[Section 2]{A}). 
\end{example}

\begin{remark}
It is easy to check directly that Lemma \ref{TpartofZ} implies that the rank of the centre is the number of $m$-multipartitions of $2$. This also follows by passing to the field of fractions of $A$, and using the result of \cite{AK} which shows that in the semisimple case (for any $n$), the centre has rank equal to the number of $m$-multipartition of $n$.  
\end{remark}

\section{On the image of $Z(H_2^\mathrm{aff})$}
\subsection{}
Next we do some simple computations. Let $H_k = \sum_{j=0}^{k}X_1^jX_2^{k-j}$ be the complete symmetric function in $X_1$ and $X_2$ of degree $k$, and let $\mathcal I$ denote the image of the centre of $\Ha$ in $\Hl$. Recall that $f_1= \fv(X_1)$.

\begin{definition}
Let $f_2 = Tf_1T$. Thus $f_2 \in J_v$. 
\end{definition}

\begin{lemma}
\label{explicitcalculation}
a) In $\Ha$, for any $k\geq 2$
\[
TX_1^kT = qX_2^k -Q(X_1X_2)H_{k-2}T
\]
b) We have 
\[
q\fv(X_2) = f_2 + QzT,
\]
where $z = (-1)^{m+1}e_m + (X_1X_2)(\sum_{j=0}^{m-2}(-1)^{j}e_{j}H_{m-2-j}) \in S_m^W$. 
\end{lemma}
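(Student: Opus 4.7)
The plan for both parts is a direct calculation driven by three ingredients: the commutation relation \eqref{commutation}, the quadratic relation $T^2 = QT + q$ (an immediate consequence of $(T-q)(T+1)=0$), and the explicit formula for $D$ on monomials recorded in the previous lemma. Part (a) is established first, then fed into part (b) by expanding $f_1 = \sum_{j=0}^m (-1)^{m-j} e_{m-j} X_1^j$ and treating each term separately.

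For part (a), I would apply $Tf = {}^sfT + QD(f)$ with $f = X_1^k$ to obtain $TX_1^k = X_2^k T + QD(X_1^k)$, then multiply on the right by $T$ and use $T^2 = QT + q$, yielding
\[
TX_1^k T = qX_2^k + Q\bigl(X_2^k + D(X_1^k)\bigr)T.
\]
The previous lemma (the $j<i$ case applied with $i=k,\,j=0$) gives $D(X_1^k) = -\sum_{\ell=0}^{k-1} X_1^\ell X_2^{k-\ell}$, so that $X_2^k + D(X_1^k) = -\sum_{\ell=1}^{k-1} X_1^\ell X_2^{k-\ell} = -X_1 X_2 H_{k-2}$, which is the stated identity.

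For part (b), since each $e_{m-j} \in A$ is central in $\Ha$, I would write
\[
f_2 = T f_1 T = \sum_{j=0}^m (-1)^{m-j} e_{m-j}\, TX_1^j T,
\]
handling $j=0$ via $T^2 = QT + q$, $j=1$ via $TX_1T = qX_2$, and $j \geq 2$ via part (a). Collecting the summands without a trailing $T$ reassembles $q\sum_{j=0}^m (-1)^{m-j} e_{m-j} X_2^j = q\fv(X_2)$, while a parallel collection of the $T$-terms yields an identity of the form $f_2 = q\fv(X_2) - QzT$, equivalently $q\fv(X_2) = f_2 + QzT$ as required, with
\[
z = (-1)^{m+1} e_m + (X_1 X_2)\sum_{j=2}^m (-1)^{m-j} e_{m-j} H_{j-2}.
\]
Substituting $i = m-j$ in the remaining sum converts this expression into the formula in the statement. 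Finally, to verify $z \in S_m^W$, observe that every monomial of $(X_1 X_2)H_{m-2-i}$ has the form $X_1^a X_2^b$ with $a,b \geq 1$ and $a+b = m-i \leq m$, forcing $a,b \leq m-1$, while $(-1)^{m+1} e_m$ is a scalar; symmetry is automatic since each of $X_1 X_2$, $e_i$, and $H_{m-2-i}$ is symmetric.

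There is no deep obstacle here: the whole argument reduces to one application of \eqref{commutation}, the quadratic relation, and the closed formula for $D$ from the previous lemma. The only real care needed is bookkeeping, namely tracking the $(-1)^{m-j}$ signs and the index shift $i=m-j$ so that the coefficient of $T$ ends up in exactly the form given for $z$.
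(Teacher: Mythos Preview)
Your argument is correct and follows essentially the same approach as the paper: part (a) is the direct calculation via \eqref{commutation} and $T^2=QT+q$ that the paper alludes to, and your treatment of (b), splitting the sum defining $f_2=Tf_1T$ into $j=0$, $j=1$, and $j\geq 2$ and collecting terms, matches the paper's computation exactly. You additionally spell out why $z\in S_m^W$, which the paper simply asserts.
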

\begin{proof}
The proof of a) is a direct calculation using Equation (\ref{commutation}). For b) we have using a) 
\[
qX_2^k = TX_1^kT +Q(X_1X_2)h_{k-2}T, \qquad (k \geq 2).
\]
Moreover, $qX_2 = TX_1T$, and $q = T^2 -QT$, so that
\[
q\fv(X_2) = f_2 + Q\big( (-1)^{m+1}e_m +(X_1X_2)(\sum_{j=2}^{m} (-1)^{m-j}e_{m-j}H_{j-2})\big)T,
\]
as claimed.
\end{proof}

\begin{remark}
Note that one has the well known identity of symmetric functions $\sum_{r=o}^n (-1)^r  e_{n-r}h_r = 0$ for elementary and complete symmetric functions in the \textit{same} set of variables. In the preceding lemma the $e_j$ are symmetric functions in the $v_i$s while the $H_k$s are symmetric in the $X_i$s.
\end{remark}

\begin{prop}
\label{TpartofI}
The elements $QX_1^kz$ lie in $\mathcal I$ for all $k \in \mathbb Z$, and moreover the elements $\{X_1^{k-1}z: 0 \leq k \leq m-1\}$ are linearly independent.
\end{prop}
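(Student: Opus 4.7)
The plan is to prove both parts directly from Lemma~\ref{explicitcalculation}(b).

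For the first assertion, I would exhibit, for each $k \in \mathbb{Z}$, an element of $Z(\Ha) = S^W$ whose image in $\Hl$ is a unit multiple of $QX_1^k zT$. The natural choice is
\[
c_k := X_2^k \fv(X_1) + X_1^k \fv(X_2),
\]
which is manifestly invariant under $X_1 \leftrightarrow X_2$, hence central. Its image in $\Hl$ is computed term by term. The first summand equals $X_2^k f_1$ and lies in $\Jv$ directly. By Lemma~\ref{explicitcalculation}(b), $\fv(X_2) = q^{-1}(f_2 + QzT)$, and since $f_2 = T f_1 T$ and $\Jv$ is two-sided, $X_1^k f_2 \in \Jv$ for every $k \in \mathbb{Z}$ (the case $k<0$ using that $X_1^{-1} \in S \subseteq \Ha$). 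Hence the image of $c_k$ in $\Hl$ is $q^{-1} Q X_1^k zT$, and since $q \in A^\times$ this gives $QX_1^k zT \in \mathcal{I}$.

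For linear independence of $\{X_1^{k-1} zT : 0 \leq k \leq m-1\}$, equivalently of $\{X_1^{k-1} z : 0 \leq k \leq m-1\}$ in $R_m$, the plan is a leading-term argument. Inspecting the formula for $z$, the highest $X_2$-power is $X_2^{m-1}$, arising only from the $j=0$ summand of $X_1 X_2 \sum(-1)^j e_j H_{m-2-j}$; the coefficient of $X_2^{m-1}$ in $X_1 X_2 \cdot H_{m-2}$ is $X_1$. View $R_m$ as a free module over $B := A[X_1]/(\fv(X_1)) \subseteq R_m$ with $B$-basis $\{1, X_2, \dots, X_2^{m-1}\}$; the $X_2^{m-1}$-component of $X_1^{k-1} z$ in this decomposition is $X_1^{k-1} \cdot X_1 = X_1^k \in B$. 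Any dependence relation $\sum_{k=0}^{m-1} a_k X_1^{k-1} z = 0$ in $R_m$ therefore forces $\sum_k a_k X_1^k = 0$ in $B$, and since $\{X_1^k : 0 \leq k \leq m-1\}$ is the standard $A$-basis of $B$, all $a_k$ vanish.

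The main difficulty is purely bookkeeping: checking the ideal containment $X_1^k f_2 \in \Jv$ uniformly in $k$, and reading off the leading $X_2$-coefficient of $z$ from the complete-symmetric-function expansion. The key structural input — that $\fv(X_2)$ captures $QzT$ modulo $\Jv$ — is already in hand from Lemma~\ref{explicitcalculation}.
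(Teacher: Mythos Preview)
Your argument is correct and follows the same line as the paper's: for the first part you exhibit the same central element $X_1^k\fv(X_2)+X_2^k\fv(X_1)$ (the paper produces it by adding and subtracting $qX_2^k f_1$ from $QX_1^kzT$, you start from it and compute its image), and for the second part both proofs extract the $X_2^{m-1}$–coefficient of $X_1^{k-1}z$ and find it equals $X_1^k$. The only cosmetic difference is that the paper first invokes Lemma~\ref{symmetricnecessity} to place $X_1^{k-1}z$ in $R_m^W$ and then reads off the coefficient of the monomial symmetric function $m_{k(m-1)}$, whereas you bypass symmetry by using the $B$-module decomposition $R_m=\bigoplus_j B\,X_2^j$; both yield the same triangularity.
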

\begin{proof}
From the previous lemma we have
\[
\begin{split}
QX_1^kzT &= qX_1^k\fv(X_2) - X_1^kf_2 \\
& = qX_1^k\fv(X_2) + qX_2^k\fv(X_1) -(qX_2^kf_1 +X_1^kf_2) \\
& \in qX_1^k\fv(X_2) + qX_2^k\fv(X_1) + \Jv.
\end{split}
\]
hence we see that $QX_1^kzT \in \Hl = \Ha/\Jv$ is in the image of the centre of $\Ha$ as required.

It remains to show that the elements $\{X_1^{k-1} z: 0 \leq k \leq  m-1\}$ are linearly independent. Since $A$ is an integral domain, we see that $X_1^{k-1}zT \in \mathcal Z$, and hence $X_1^{k-1}z \in R_m^W$ by Lemma \ref{symmetricnecessity}. Now we have
\[
X_1^{k-1}z = (-1)^{m+1}e_mX_1^{k-1} - (X_1^{k}X_2)(\sum_{j=0}^{m-2}(-1)^{j+1}e_{j}H_{m-2-j}).
\]
Now consider this expression for $X_1^{k-1}z$ as a linear combination of the monomial symmetric functions $m_{ij}$ lying in $R_m$: by considering the terms involving $X_2^{m-1}$ we see that the coefficient of $m_{j(m-1)}$ is zero unless $j=k$ in which case it is $1$. It follows immediately that the $X_1^{k-1} z$ in the range $0\leq k \leq m-1$ are linearly independent.
\end{proof}

\subsection{}
We can now combine the above results to establish our main theorem.

\begin{theorem}
Let $B$ denote the localization $A$ where $Q=q-1$ is inverted. Then, over $B$, the centre of $\Ha$ surjects onto the centre of $\Hl$.
\end{theorem}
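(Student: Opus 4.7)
The plan is to exploit the decomposition $\mathcal Z = R_m^W \oplus (\mathcal Z \cap R_mT)$ provided by Lemma~\ref{symmetricnecessity} and handle each summand separately. The first summand is trivially in the image $\mathcal I$: any $f \in R_m^W$ admits a symmetric lift $\tilde f \in S^W \subset \Ha$, and since $S^W$ is the centre of $\Ha$ computed in Section~\ref{onthecenter}, its image $\tilde f + \Jv = f$ lies in $\mathcal I$. So the substantive task is to show $\mathcal Z \cap R_m T \subseteq \mathcal I$ after inverting $Q$.

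For this, Proposition~\ref{TpartofI} has already supplied the necessary central preimages: $QX_1^k z T \in \mathcal I$ for every integer $k$, so over $B$ we have $X_1^k z T \in \mathcal I$. In particular, the $m$ elements $\{X_1^{k-1}z T : 0 \le k \le m-1\}$ all lie in $\mathcal I$, and it remains to show that they span $\mathcal Z \cap R_mT$ as a $B$-module (in fact, as will emerge, already as an $A$-module).

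The plan for this spanning is to compare with the explicit $A$-basis $\{p_i T\}_{0 \le i \le m-1}$ of $\mathcal Z \cap R_mT$ constructed in Lemma~\ref{TpartofZ}. That basis has the property $p_i = m_{i(m-1)} + (\text{terms in } R_{m-1}^W)$, so the projection $M \to \mathrm{span}\{m_{i(m-1)} : 0 \le i \le m-1\}$ picking out the ``top'' components is an $A$-linear isomorphism. On the other hand, the final paragraph of the proof of Proposition~\ref{TpartofI} computes that the $m_{j(m-1)}$-coefficient of $X_1^{k-1}z$ equals $\delta_{jk}$, so under the same projection $X_1^{k-1} z \mapsto m_{k(m-1)}$. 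Hence $\{X_1^{k-1}z\}_{0 \le k \le m-1}$ is already an $A$-basis of $M$, and consequently $\{X_1^{k-1}zT\}_{0 \le k \le m-1}$ is an $A$-basis of $\mathcal Z \cap R_m T$. Combining this with the preceding paragraph yields $\mathcal Z \cap R_m T \subseteq \mathcal I$ over $B$, and together with the inclusion $R_m^W \subseteq \mathcal I$ we obtain $\mathcal Z \otimes_A B \subseteq \mathcal I$, which is the desired surjection.

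The only real conceptual step is the comparison of the two families $\{p_i\}$ and $\{X_1^{k-1}z\}$ via the top-component projection, upgrading the ``linear independence'' of Proposition~\ref{TpartofI} to a spanning statement; the inversion of $Q$ is needed precisely at the moment one wants to strip the factor $Q$ from the relation $QX_1^k z T \in \mathcal I$. No new calculations beyond those already recorded in the preceding lemmas should be required.
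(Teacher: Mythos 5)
Your proposal is correct and follows essentially the same route as the paper: reduce to the summand $\mathcal Z\cap R_mT$, use Proposition~\ref{TpartofI} (with $Q$ inverted) to get $X_1^{k-1}zT\in\mathcal I$, and then match $X_1^{k-1}z$ with $p_k$ by comparing coefficients of the top monomial symmetric functions $m_{j(m-1)}$. Your "top-component projection" phrasing is just a repackaging of the paper's coefficient comparison, which in fact yields the equality $X_1^{k-1}z=p_k$ on the nose.
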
  
\begin{proof}
It is clearly sufficient to show that $p_iT$ lies in $\mathcal I$, where $p_i \in R_m^W$, $(0 \leq i \leq m-1)$, is as in Lemma \ref{TpartofZ}. Since we have inverted $Q$, Proposition \ref{TpartofI} shows that we have $X_1^kzT \in \mathcal I$ for all $k \in \mathbb Z$. Now by the proof of the previous proposition, we also know that the coefficient of $m_{j(m-1)}$ in $X_1^{k-1}z$ ($0 \leq k \leq m-1$) in the basis $\{m_{ij}\}$ of restricted monomial symmetric functions is $\delta_{jk}$, and the same is true, by definition, for the $p_k$. Since $\mathcal I \subset \mathcal Z$,  we can write $X_1^kzT$ as a linear combination of the elements $p_iT$, hence it follows immediately that $p_kT = X_1^{k-1}zT$, and we are done.
\end{proof}

\begin{example}
We consider again the case $m=3$, keeping the notation of the previous example. Then $z=p_1$, and it is easy to check that $X_1p_1 = p_2$, and similarly

\[
\begin{split}
X_1^{-1}p_1 &= e_3X_1^{-1} - e_1X_2 + X_2(X_1+X_2) \\
&= (e_2-e_1X_1+ X_1^2) - e_1X_2 + X_2(X_1+X_2) \\
&= e_2 -e_1(X_1+X_2)+(X_1^2+X_1X_2+X_2^2) \\
&=p_0
\end{split}
\]
so that $X_1^{k-1}z = p_{k}$ for $0 \leq k \leq 2$.

\end{example}

\section{Comments on the general case}
\subsection{}
We wish to consider the following conjecture.

\begin{conjecture}
\label{Centreconjecture}
Let $H_n^{\text{aff}}$ be the affine Hecke algebra with coefficients extended to $B$, the ring \[
A = \mathbb Z[q^{\pm1},v_1^{\pm 1}, \ldots, v_m^{\pm 1}]\]
with $Q = q-1$ inverted. Let $\psi_m\colon H_n^{\text{aff}} \to \mathcal K_n$ be the quotient map. Then
\[
\psi_m(Z(H_n^{\text{aff}})) = Z(\mathcal K_m).
\]
\end{conjecture}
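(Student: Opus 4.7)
The natural approach is to mimic the two-step strategy used for $n=2$. First, I would generalize Lemma~\ref{symmetricnecessity} to give a componentwise description of $Z(\mathcal K_n)$ in the cyclotomic basis $\{X^\alpha T_w : 0 \le \alpha_i \le m-1,\ w \in S_n\}$. Writing a putative central element as $z = \sum_{w \in S_n} g_w T_w$ with each $g_w$ restricted, the relations $T_i z = z T_i$ for each simple reflection translate into a coupled system of identities involving the divided-difference operators $D_i$ associated to the $s_i$. Disentangling these should force each $g_w$ to lie in a prescribed submodule $M_w \subseteq R_m$ defined by symmetry under an appropriate stabilizer subgroup of $S_n$, thereby reducing the conjecture to realizing each $M_w T_w$ inside $\psi_m(Z(\Han))$.

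Second, I would produce enough central elements of $\Han$ whose images hit each component $M_w T_w$ after inverting $Q$. The key relation to generalize is Lemma~\ref{explicitcalculation}(b); iterating the commutation identity (\ref{commutation}) along a reduced expression for $w$ should yield an identity of the form
\[
q^{\ell(w)} \fv(X_{w^{-1}(1)}) \equiv Q^{\ell(w)} z_w T_w + \sum_{w' < w} Q^{\ell(w')} h_{w,w'}\, T_{w'} \pmod{\Jv},
\]
for explicit $z_w \in R_m^W$ built from $D$ and the elementary symmetric functions in the $v_i$'s. Multiplying such identities by symmetric polynomials in $X_1,\ldots,X_n$ (which already lie in $Z(\Han)$) and inducting on the Bruhat order would provide, after inverting $Q$, a supply of central elements whose leading $T_w$-terms exhaust $M_w$. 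A final rank comparison, using the Ariki-Koike count of $Z(\mathcal K_n)$ by $m$-multipartitions of $n$ at a semisimple specialization, would confirm that no component is missed.

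The principal obstacle, in my view, is controlling the cascade of lower-order corrections $h_{w,w'} T_{w'}$ simultaneously with the symmetry constraints defining $M_w$. For $n=2$ a single identity together with the formula $p_k = X_1^{k-1}z$ sufficed, but for general $n$ the inductive bookkeeping on Bruhat order, keeping careful track of which stabilizer each $g_{w'}$ must be symmetric under, is delicate and likely where the real work lies. A more conceptual alternative would be a flatness-and-specialization argument: over $B$, both $\psi_m(Z(\Han))$ and $Z(\mathcal K_n)$ are finitely generated $B$-modules, and one could try to verify the equality at a Zariski-dense set of semisimple specializations of $q$ (where \cite{AK} applies) and then propagate, thus avoiding the explicit combinatorics altogether.
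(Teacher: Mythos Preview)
The statement you are attempting is labeled a \emph{conjecture} in the paper, and the paper does not claim a proof of it in general. What the paper actually establishes is the case $n=2$ (the main Theorem in Section~2) and, separately, a special case over an algebraically closed field of characteristic zero with $q$ of infinite order and $v_i=q^{a_i}$ (the Proposition in Section~3.2, deduced from Brundan's theorem on degenerate cyclotomic Hecke algebras via Lusztig's completion isomorphism). There is therefore no ``paper's own proof'' of Conjecture~\ref{Centreconjecture} to compare your proposal against; the paper lists it as open and records the evidence you cite.

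Your proposal is, as you yourself say, a research plan rather than a proof, and you have correctly located the main obstruction: for $n>2$ the cascade of Bruhat-lower corrections produced when pushing $\fv(X_1)$ through a reduced word for $T_w$ has not been successfully controlled. Two more specific concerns are worth flagging. First, your Step~1 presupposes a decomposition $Z(\mathcal K_n)=\bigoplus_{w} M_w T_w$ with each $M_w$ cut out by stabilizer-symmetry conditions; for $n=2$ this worked only because $R_m^W$ happens to be central already, forcing the splitting $\mathcal Z = R_m^W \oplus (\mathcal Z\cap R_mT)$. For $n\ge 3$ a restricted symmetric polynomial need not be central, so there is no reason to expect such a clean componentwise description, and indeed describing $Z(\mathcal K_n)$ intrinsically is part of what makes the conjecture hard. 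Second, your alternative flatness-and-specialization argument runs into the well-known fact that formation of centres does not commute with base change: equality of $\psi_m(Z(\Han))$ and $Z(\mathcal K_n)$ at a Zariski-dense set of semisimple closed points does not by itself propagate to equality over $B$, since the centre of a specialization can strictly contain the specialization of the centre. One would need at least that both sides are free $B$-modules of the same rank, and the freeness of $Z(\mathcal K_n)$ over $B$ is not obvious a priori.
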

\noindent
In fact, it may be easier (and as useful) to show this in the case where $H_n^{\text{aff}}$ is defined over a field $F$, and the parameter $q$ is not equal to $1$. 

\begin{remark}
As pointed out in \cite{A}, if we specialise to $q=1$, \textit{i.e.} $Q=0$, then the image of the centre of the affine Hecke algebra does \textit{not} necessarily surjects onto the centre of the specialised Ariki-Koike algebra (for $\Hl$, if we say require $X_1$ to satisfy $X_1^3 -1 =0$, then at $q=1$ this is just the group algebra of the complex reflection group $G(3,1,2)$ which has a $9$ dimensional centre -- the specialisation of the centre of $\Hl$ -- whereas the images of $X_1+X_2$, $X_1X_2$ only generate a $6$-dimensional subalgebra). Of course at $q=1$ one should instead consider the degenerate algebra.

While the above conjecture is certainly not new, it does not seem to be explicitly stated in the literature, and some of existing literature is unclear as to its status: the counterexample of \cite{A} is quoted in \cite{M} in a fashion which makes it appear it is more general than \cite{A} intended to imply\footnote{The author thank Profs Ariki and Mathas for helping him sort out this confusion.}.

\end{remark}

We list the following evidence for the conjecture:

\begin{itemize}
\item \cite{AK} shows that the conjecture holds in the semisimple case (they also show explicitly the conditions on the parameters under which the Ariki-Koike algebras is semisimple).
\item This note establishes the case $n=2$.
\item In an orthogonal direction, Francis and Graham \cite{FG} have verified the conjecture for the case of the finite Hecke algebra of type $A$, \textit{i.e.} the case $m=1$. 
\end{itemize}

\subsection{}

We end with another result which supports the conjecture when we work over $F$, an algebraically closed field of characteristic zero. Let $\mathfrak H_n$ be the degenerate, or graded, affine Hecke algebra of type $A$. As a vector space it is isomorphic to $\mathbb C[S_n]\otimes \mathbb C[r][x_1,\ldots,x_n]$. If $\mathscr T$ denotes the algebraic torus with regular functions $\mathcal O = \mathbb C[q^{\pm1}, X_1^{\pm 1},\ldots, X_n^{\pm 1}]$, and $\mathfrak t\oplus \mathbb C$ the vector space with functions $\bar{\mathcal O} = \mathbb C[r][x_1,x_2,\ldots,x_n]$ note that we may use the group structure on $\mathscr T$ to identify $\mathfrak t \oplus \mathbb C$ with the tangent space of $\mathscr T$ at any point. 

 Now let $I$ be a maximal ideal of the centre $\mathcal Z$ of $\Han$. Thus $I$ corresponds to an $S_n$-orbit $\Sigma$ in $\mathscr T$. We want to consider the completions $\widehat{\mathcal Z}$ and $\widehat{\Han}$ with respect to $I$. Assume that all of the coordinates of the elements of $\Sigma$ are equal to a power of $q$. In this case, by choosing a logarithm of $q$ we may attach to $\Sigma$ a maximal ideal $\mathfrak I$ of the centre $\mathfrak Z = \mathbb C[r][x_1,\ldots,x_n]^{S_n}$ of $\mathfrak H_n$, and consider the corresponding completions $\widehat{\mathfrak H}_n$ and $\widehat{\mathfrak Z}$. The algebras $\widehat{\mathcal Z}$ and $\widehat{\mathfrak Z}$ are then naturally isomorphic and Lusztig \cite[9.3]{L} has shown that the $I$-adic completion $\widehat{\Han}$ of $\Han$ is isomorphic to the corresponding completion $\widehat{\mathfrak H}_n$ of $\mathfrak H_n$ as algebras over $\widehat{\mathcal Z} \cong \widehat{\mathfrak Z}$. Moreover, the  isomorphism restricts to give an isomorphism between the (completed) commutative subalgebras $\widehat{\mathcal O}$ and $\widehat{\bar{\mathcal O}}$.

Let $\mathcal K_n$ be the cyclotomic Hecke algebra over $F$, where let us moreover assume that $v_i = q^{a_i}$ for some integers $a_i$, ($1 \leq i \leq m$), and $q \in F$ has infinite order. It is known \cite{DM} that representation theory of a general cyclotomic quotient can be reduced to the case where the $v_i$ are of this form\footnote{The restriction that $q$ have infinite order however, is genuinely restrictive.}. Recently, Brundan \cite{Br} has established the analogue of Conjecture \ref{Centreconjecture} for the degenerate cyclotomic Hecke algebras $\mathfrak K_n$, where $\mathfrak K_n$ is the quotient of $\mathfrak H_n$ by the two-sided ideal generated by $f_r(x_1)$ where $f_r(t) = \prod_{i=1}^m (t-a_i)$.

Now we may decompose a cyclotomic Hecke algebra according to the spectrum of the image of $\mathcal Z$, which is certainly a central subalgebra (in fact, it follows from the work of Lyle-Mathas \cite{LM} that this decomposition yields the blocks of the cyclotomic algebra, but we do not need that here). For each of these subalgebras, the above discussion shows that the summands of the cyclotomic and degenerate cyclotomic Hecke algebras are isomorphic as they are quotients of $\widehat{\Han}$ and $\widehat{\mathfrak H}_n$ which are annihilated by a finite power of some maximal idea $I$ (resp. $\mathfrak I$) of the centre $\mathcal Z$ (resp. $\mathfrak Z$), since Lusztig's isomorphisms are maps of $\widehat{\mathcal Z}\cong \widehat{\mathfrak Z}$-algebras. Thus we can deduce from Brundan's work the following result:

\begin{prop}
Let $F$ be an algebraically closed field of characteristic zero, $q \in F$ of infinite order and $\mathcal K_n$ a cyclotomic Hecke algebra with $v_i=q^{a_i}$ for some integers $a_i$. Then the centre of $\mathcal K_n$ is equal to the image of the centre of $\Han$.
\end{prop}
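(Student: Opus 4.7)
My approach is to carry out the blockwise reduction sketched in the paragraph preceding the proposition, transferring the problem to the degenerate cyclotomic Hecke algebra via Lusztig's completion isomorphism and then invoking Brundan's theorem. First I would decompose $\mathcal K_n = \bigoplus_I \mathcal K_n^I$ over the finitely many maximal ideals $I\subset\mathcal Z = Z(\Han)$ lying in the support of $\mathcal K_n$. Each such $I$ corresponds to an $S_n$-orbit $\Sigma\subset\mathscr T$ contained in $\{\fv(X_1) = 0\}$, and since $v_i = q^{a_i}$, every coordinate of every point of $\Sigma$ is a power of $q$. Choosing a logarithm of $q$ attaches to each $I$ a maximal ideal $\mathfrak I\subset\mathfrak Z$, and $\mathfrak K_n$ decomposes correspondingly as $\bigoplus_{\mathfrak I}\mathfrak K_n^{\mathfrak I}$.

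The technical heart of the argument is to identify $\mathcal K_n^I$ with $\mathfrak K_n^{\mathfrak I}$ as $\widehat{\mathcal Z}\cong\widehat{\mathfrak Z}$-algebras. Both are quotients of the Lusztig-isomorphic completions $\widehat{\Han}$ and $\widehat{\mathfrak H}_n$ killed by a finite power of $I$ (resp.\ $\mathfrak I$), so it suffices to check that the completion of $\Jv$ corresponds, under Lusztig's isomorphism, to the completion of the ideal generated by $f_r(x_1)$. Fix a point $(q^{a_{j_1}},\ldots,q^{a_{j_n}})\in\Sigma$ and local coordinates $u_k$ so that $X_k = q^{a_{j_k}}\exp(u_k\log q)$ on the affine side and $x_k = a_{j_k} + u_k$ on the degenerate side (under Lusztig's identification of $\widehat{\mathcal O}$ with $\widehat{\bar{\mathcal O}}$). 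Then
\[
X_1 - q^{a_i} = q^{a_i}\bigl(q^{a_{j_1}-a_i}\exp(u_1\log q)-1\bigr),
\]
which is a unit in the completion when $a_i\neq a_{j_1}$ and equals $(\log q)\,u_1$ times a unit otherwise; meanwhile $x_1 - a_i = u_1 + (a_{j_1}-a_i)$ is a unit or equal to $u_1$ in parallel. Thus $\fv(X_1)$ and $f_r(x_1)$ generate the same completed ideal, provided $\log q \neq 0$, which is exactly the infinite-order hypothesis on $q$.

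With the blockwise isomorphism of $\widehat{\mathcal Z}$-algebras established, centres are sent to centres and the image of $\mathcal Z$ in $\mathcal K_n^I$ is mapped onto the image of $\mathfrak Z$ in $\mathfrak K_n^{\mathfrak I}$. Brundan's theorem provides that the latter image equals $Z(\mathfrak K_n^{\mathfrak I})$, whence the former equals $Z(\mathcal K_n^I)$; summing over $I$ yields the proposition. The principal obstacle is the ideal matching in the preceding paragraph: one must consistently choose base points and a logarithm branch on the two sides, and verify that Lusztig's isomorphism, which is specified only up to the completed commutative identification $\widehat{\mathcal O}\cong\widehat{\bar{\mathcal O}}$, indeed transports the generator of the cyclotomic ideal to a set of generators of the degenerate cyclotomic ideal. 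Once that bookkeeping is arranged, the elementary computation above does the work, and the assumption that $q$ has infinite order is seen to enter in an essential way precisely at this step.
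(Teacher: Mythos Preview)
Your proposal is correct and follows essentially the same route as the paper, which gives its argument in the paragraph immediately preceding the proposition rather than in a formal proof: decompose $\mathcal K_n$ over the spectrum of the image of $\mathcal Z$, identify each summand with the corresponding summand of $\mathfrak K_n$ via Lusztig's completion isomorphism (as both are quotients of the isomorphic completions $\widehat{\Han}\cong\widehat{\mathfrak H}_n$ annihilated by a power of the relevant maximal ideal), and then invoke Brundan's theorem. In fact you supply more than the paper does, namely the explicit verification that $\fv(X_1)$ and $f_r(x_1)$ differ by a unit in the completed commutative subalgebra and hence generate the same two-sided ideal --- a point the paper leaves implicit.
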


\begin{remark}
It is shown in \cite{A1} that the cyclotomic Hecke algebra is semisimple precisely when the polynomial:
\[
P(q,\mathbf v) = \prod_{1 \leq i< j \leq m}\big( \prod_{-n < a < n}(q^av_i-v_j)\big)\cdot \prod_{k=1}^n(1+q+\ldots + q^{k-1}),
\]
is nonvanishing. Thus the cyclotomic Hecke algebra need not be semisimple even when $q$ is not a root of unity, so this result includes cases which are not covered by the results of \cite{AK}. It should also be noted that Brundan and Kleshchev \cite{BK} have recently shown that if $F$ is any field of characteristic zero and $q$ is not a root of unity then the cyclotomic and degenerate cyclotomic Hecke algebras are isomorphic. One can presumably use their results to extend the above Proposition to this more general situation.
\end{remark}

\noindent
\textit{Acknowledgements}: We thank Prof. Ariki and Prof. Mathas for helpful correspondence. We also thank Anthony Henderson for suggesting that the world would not be significantly worse off if this note were to be posted online.


\begin{thebibliography}{AAA}

\bibitem[A1]{A1} Ariki, Susumu, \textit{On the semi-simplicity of the Hecke Algebra of $(\mathbb Z/r\mathbb Z)\wr \mathfrak S_n$}, J. Algebra \textbf{169} (1994), 216--225.

\bibitem[A2]{A} Ariki, Susumu, \textit{On the decomposition numbers of the Hecke algebra of $G(m,1,n)$}, J. Math. Kyoto Univ.  {\bf 36 } (1996), no. 4, 789--808.

\bibitem[AK]{AK}  Ariki, Susumu; Koike, Kazuhiko. 
A Hecke algebra of $({Z}/r{Z})\wr{ S}\sb n$ and 
construction of its irreducible representations.
{\em Adv.  Math. } {\bf 106} (1994), no. 2, 216--243.

\bibitem[Br]{Br} J. Brundan, \textit{Centers of degenerate cyclotomic Hecke algebras and parabolic category} $\mathcal O$.  Represent. Theory  \textbf{12}  (2008), 236--259.

\bibitem[BK]{BK} J. Brundan, S. Kleshchev, \textit{Blocks of cyclotomic Hecke algebras and Khovanov-Lauda algebras},  Invent. Math.  \textbf{178}  (2009),  no. 3, 451--484.

\bibitem[DM]{DM} R. Dipper, A. Mathas, \textit{Morita equivalences of Ariki-Koike algebras}, Math. Z. \textbf{240} (2002), 579--610.

\bibitem[FG]{FG} A. Francis, J. Graham, \textit{Centres of Hecke algebras: the Dipper-James conjecture},  J. Algebra  \textbf{306}  (2006),  no. 1, 244--267.

\bibitem[L]{L} G. Lusztig, \textit{Affine Hecke algebras and their graded version}, J. Amer. Math. Soc. 
\textbf{2} (1989) no. 3, 599--635.

\bibitem[LM]{LM} S. Lyle, A. Mathas, \textit{Blocks of cyclotomic Hecke algebras}, Adv. Math. \textbf{216} (2007), 854--878.

\bibitem[M]{M} A. Mathas, \textit{The representation theory of the Ariki-Koike and cyclotomic q-Schur algebras}, Representation theory of algebraic groups and quantum groups, Adv. Studies Pure Math., 40 (2004), 261-320.

\end{thebibliography}
\end{document}